\documentclass[12pt]{amsart}
\usepackage[latin1]{inputenc}
\usepackage{color}
\usepackage{enumerate}
\usepackage{amssymb}
\usepackage[all]{xy}
\usepackage{hyperref}


\vfuzz2pt 
\hfuzz2pt 
\newtheorem{theorem}{Theorem}[section]
\newtheorem{lemma}[theorem]{Lemma}

\newtheorem{proposition}[theorem]{Proposition}

\theoremstyle{definition}

\theoremstyle{remark}

\newtheorem{question}[theorem]{Question}



\newcommand{\R}{\mathbb{R}}
\newcommand{\N}{\mathbb{N}}

\def\cB{{\mathcal B}}




\setcounter{tocdepth}{1}

\textwidth=36cc
\baselineskip 16pt
\textheight 620pt
\headheight 20pt
\headsep 20pt
\usepackage{color}
\usepackage{enumerate}
\topmargin 0pt
\footskip 40pt
\parskip 0pt
\oddsidemargin 10pt
\evensidemargin 10pt



\begin{document}
\setcounter{tocdepth}{1}


\title{A renorming characterization of Banach spaces containing $\ell_1(\kappa)$}

\author[Avil\'es]{Antonio Avil\'es}
\address[Avil\'es]{Universidad de Murcia, Departamento de Matem\'{a}ticas, Campus de Espinardo 30100 Murcia, Spain
	\newline
	\href{https://orcid.org/0000-0003-0291-3113}{ORCID: \texttt{0000-0003-0291-3113} } }
\email{\texttt{avileslo@um.es}}

\author[Mart\'inez-Cervantes]{Gonzalo Mart\'inez-Cervantes}
\address[Mart\'inez-Cervantes]{Universidad de Murcia, Departamento de Matem\'{a}ticas, Campus de Espinardo 30100 Murcia, Spain
	\newline
	\href{http://orcid.org/0000-0002-5927-5215}{ORCID: \texttt{0000-0002-5927-5215} } }	

\email{gonzalo.martinez2@um.es}

%

\author[Rueda Zoca]{Abraham Rueda Zoca}
\address[Rueda Zoca]{Universidad de Murcia, Departamento de Matem\'{a}ticas, Campus de Espinardo 30100 Murcia, Spain
	\newline
	\href{https://orcid.org/0000-0003-0718-1353}{ORCID: \texttt{0000-0003-0718-1353} }}
\email{\texttt{abraham.rueda@um.es}}
\urladdr{\url{https://arzenglish.wordpress.com}}

\thanks{Research partially supported by Fundaci\'{o}n S\'{e}neca [20797/PI/18] and by project MTM2017-86182-P (Government of Spain, AEI/FEDER, EU). The research of G. Mart\'inez-Cervantes
	was co-financed by the European Social Fund and the Youth European Initiative under Fundaci\'on S\'eneca
	[21319/PDGI/19]. The research of A. Rueda Zoca was also supported by Juan de la Cierva-Formaci\'on fellowship FJC2019-039973, by MICINN (Spain) Grant PGC2018-093794-B-I00 (MCIU, AEI, FEDER, UE), by Junta de Andaluc\'ia Grant A-FQM-484-UGR18 and by Junta de Andaluc\'ia Grant FQM-0185.}

\keywords{Renorming, $\ell_1(\kappa)$, octahedral norm, ball-covering}

\subjclass[2010]{Primary 46B20, 46B26, 46B03; Secondary 46B04}

\begin{abstract} 
A result of G. Godefroy asserts that a Banach space $X$ contains an isomorphic copy of $\ell_1$ if and only if there is an equivalent norm $|||\cdot|||$ such that, for every finite-dimensional subspace $Y$ of $X$ and every $\varepsilon>0$, there exists $x\in S_X$ so that $|||y+r x|||\geq (1-\varepsilon)(|||y|||+\vert r\vert)$ for every $y\in Y$ and every $r\in\mathbb R$. In this paper we generalize this result to larger cardinals, showing that if $\kappa$ is an uncountable cardinal then a Banach space $X$ contains a copy of $\ell_1(\kappa)$ if and only if there is an equivalent norm $|||\cdot|||$ on $X$ such that for every subspace $Y$ of $X$ with $dens(Y)<\kappa$ there exists a norm-one vector $x$ so that $||| y+r x|||=|||y|||+\vert r\vert$ whenever $y\in Y$ and $r\in\R$. This result answers a question posed by S. Ciaci, J. Langemets and A. Lissitsin, where the authors wonder whether the previous statement holds for infinite succesor cardinals. We also show that, in the countable case, the result of Godefroy cannot be improved to take $\varepsilon=0$. 
\end{abstract}

\maketitle

\section{Introduction}

The study of characterising the containment of $\ell_1$ sequences in Banach spaces has been a long standing problem in Banach Space Theory. Probably, one of the most famous characterisations in this line is Rosenthal $\ell_1$ theorem \cite{rosenthal}. Another well known characterisation, due to B. Maurey \cite{maurey}, is known in terms of a more geometric condition: a Banach space $X$ contains an isomorphic copy of $\ell_1$ if, and only if, there exists a non-zero element $u\in X^{**}$ so that
$$\Vert x+u\Vert=\Vert x-u\Vert$$
holds for every $x\in X$. This characterisation was put further during the eighthies, by succesive works of B. Maurey and G. Godefroy, which yield the following result.

\begin{theorem}[\cite{god}]\label{theo:godefroy}
Given a Banach space $X$, the following assertions are equivalent:
\begin{enumerate}
\item $X$ contains an isomorphic copy of $\ell_1$.
\item There exists an equivalent norm $|||\cdot|||$ and an element $u\in X^{**}$ so that
$$||| x+u ||| = |||x||| + 1$$
holds for every $x\in X$.
\item There exists an equivalent norm $|||\cdot|||$ such that for every finite-dimensional subspace $Y$ of $X$ and every $\varepsilon>0$ there exists $x\in S_X$ so that
$$\Vert y+r x\Vert\geq (1-\varepsilon)(\Vert y\Vert+\vert r\vert)$$
holds for every $y\in Y$ and every $r\in\mathbb R$.
\end{enumerate}
\end{theorem}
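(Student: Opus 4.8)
The plan is to prove the cycle $(1)\Rightarrow(2)\Rightarrow(3)\Rightarrow(1)$; the implications $(3)\Rightarrow(1)$ and $(2)\Rightarrow(3)$ are soft, while I expect $(1)\Rightarrow(2)$ to carry all the real difficulty. (One could close the loop differently, since $(2)\Rightarrow(1)$ is immediate from Maurey's characterisation: replacing $x$ by $-x$ in $(2)$ gives $|||x-u|||=|||x|||+1$, hence $|||x+u|||=|||x-u|||$ for all $x\in X$, and $u\neq 0$ as $|||u|||=1$.)

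\emph{$(3)\Rightarrow(1)$.} I would extract an $\ell_1$-basis by induction. Fix $\varepsilon_n>0$ with $c:=\prod_{n\ge1}(1-\varepsilon_n)>0$. Applying $(3)$ to $Y_0=\{0\}$ and $\varepsilon_1$ produces $x_1$, and, having chosen $x_1,\dots,x_{n-1}$, applying $(3)$ to $Y_{n-1}=\spann\{x_1,\dots,x_{n-1}\}$ and $\varepsilon_n$ produces $x_n\in S_X$ with $|||y+rx_n|||\ge(1-\varepsilon_n)(|||y|||+|r|)$ for all $y\in Y_{n-1}$, $r\in\R$ (in particular $|||x_n|||\ge1-\varepsilon_n$, taking $y=0$). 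A telescoping induction then gives $|||\sum_{i=1}^nr_ix_i|||\ge\big(\prod_{j=1}^n(1-\varepsilon_j)\big)\sum_{i=1}^n|r_i|\ge c\sum_{i=1}^n|r_i|$, while $|||\sum_{i=1}^nr_ix_i|||\le\sum_{i=1}^n|r_i|\,|||x_i|||\le C\sum_{i=1}^n|r_i|$ for the equivalence constant $C$ between $|||\cdot|||$ and $\|\cdot\|$; so $(x_n)$ is equivalent to the unit vector basis of $\ell_1$.

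\emph{$(2)\Rightarrow(3)$.} Note first that $u\notin X$ (else $x=-u$ would give $0=|||u|||+1$), that $|||u|||=1$ (with $x=0$), and, as noted, $|||x-u|||=|||x|||+1$, so by homogeneity $|||y+ru|||=|||y|||+|r|$ for all $y\in X$, $r\in\R$ (writing $|||\cdot|||$ also for the bidual norm). Given a finite-dimensional $Y\subseteq X$ and $\varepsilon>0$, apply the principle of local reflexivity to $E=Y+\R u\subseteq X^{**}$: for small $\delta$ there is an injective $T\colon E\to X$ with $T|_Y=\mathrm{id}_Y$ (as $u\notin X$, $E\cap X=Y$) and $(1-\delta)|||\xi|||\le|||T\xi|||\le(1+\delta)|||\xi|||$ on $E$. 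Put $x:=T(u)/|||T(u)|||\in S_X$ (note $|||T(u)|||\in[1-\delta,1+\delta]$); then for $y\in Y$, $r\in\R$, writing $r'=r/|||T(u)|||$, $|||y+rx|||=|||T(y+r'u)|||\ge(1-\delta)(|||y|||+|r'|)\ge\tfrac{1-\delta}{1+\delta}(|||y|||+|r|)$, which is $\ge(1-\varepsilon)(|||y|||+|r|)$ once $\delta$ is small enough.

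\emph{$(1)\Rightarrow(2)$: the hard part.} The plan is to use James' $\ell_1$-distortion theorem to pick, for a fast-decreasing summable $(\varepsilon_m)$, a normalised sequence $(x_n)$ in $X$ with the graduated estimates $\|\sum_{i\ge m}a_ix_i\|\ge(1-\varepsilon_m)\sum_{i\ge m}|a_i|$ for all $m$ and all finitely supported scalars, and to set $u:=w^*\text{-}\lim_{n\to\mathcal U}x_n\in X^{**}$ for a free ultrafilter $\mathcal U$; then to build $|||\cdot|||$ so that $(X,|||\cdot|||)$ behaves like an isometric $\ell_1$ along $(x_n)$, with $u$ acting as the ``all-ones functional''. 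Concretely, one wants $|||u|||=1$ together with: for every $x\in X$ there is $f$ in the unit ball of the dual norm with $f(u)=1$ and $f(x)=|||x|||$. These two properties give the conclusion at once: $|||x+u|||\le|||x|||+|||u|||=|||x|||+1$ always, and $|||x+u|||\ge f(x)+f(u)=|||x|||+1$ for the witnessing $f$. The obstacle — which I expect to be essentially the whole content of Godefroy's argument — is to realise such a norm: the original norm does not work (already on $\ell_1\oplus_\infty\R$ one must genuinely renorm), so $|||\cdot|||$ must differ from $\|\cdot\|$ in directions transverse to $\overline{\spann}\{x_n\}$ while staying equivalent, and guaranteeing the existence of the witnessing functionals for \emph{every} $x$, with exact rather than approximate equality, is delicate. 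I would try to specify $|||\cdot|||$ through its dual unit ball, taken as a suitable weak*-closed convex symmetric hull built from the coordinate functionals of $(x_n)$ and their Hahn--Banach extensions to $X$, using the graduation of the $\varepsilon_m$ to keep the norm equivalent and all constants under control; this is the step I would budget the most effort for.
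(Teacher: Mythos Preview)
The paper does not prove this theorem: Theorem~\ref{theo:godefroy} is quoted from Godefroy~\cite{god} as background (the paper also mentions that the equivalence of (1) and (3) was reproved in \cite[Theorem~4.3]{ksw}), so there is no ``paper's own proof'' to compare against.

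On the substance of your attempt: the arguments for $(3)\Rightarrow(1)$ and $(2)\Rightarrow(3)$ are standard and correct. The gap is exactly where you say it is: for $(1)\Rightarrow(2)$ you only give a plan. Picking a James-distorted $\ell_1$-sequence and a weak$^*$ cluster point $u$ is the right starting data, but the sentence ``specify $|||\cdot|||$ through its dual unit ball, taken as a suitable weak$^*$-closed convex symmetric hull built from the coordinate functionals of $(x_n)$ and their Hahn--Banach extensions'' is not yet a construction, and it is precisely this step (producing an \emph{equivalent} norm on all of $X$ for which, at every $x$, some norming functional also norms $u$) that carries the content of Godefroy's theorem. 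As written, your proposal for $(1)\Rightarrow(2)$ is a statement of intent rather than a proof; the cycle is therefore not closed. If you want to avoid this step entirely you could instead prove $(1)\Rightarrow(3)$ directly (this is what \cite{ksw} does) and keep $(2)\Rightarrow(3)\Rightarrow(1)$ together with the Maurey-based $(2)\Rightarrow(1)$, but then $(2)$ is no longer reached and the full equivalence still needs Godefroy's construction.
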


Note that the equivalence between (1) and (3) above was also obtained, with a different proof, in \cite[Theorem 4.3]{ksw}. Norms satisfying condition (3) are known as \emph{octahedral}.

In the recent preprint \cite{cll}, a characterisation in the same spirit was obtained for copies of $\ell_1(\kappa)$. In order to present such characterisation we need a bit of language. Let $X$ be a Banach space, an infinite cardinal $\kappa$ and $\alpha\in [-1,0)$. According to \cite[Proposition 2.2 and Definition 2.3]{cll}, $X$ is said to fail the \textit{$\alpha$-BCP$_{\kappa}$} if, given a subspace $Y$ of $X$ with $dens(Y)\leq\kappa$, there exists $x\in S_X$ so that
	$$\Vert y+r x\Vert\geq \Vert y\Vert+\vert \alpha\vert \vert r\vert$$ 
holds for every $y\in Y$ and every $r\in \R$.
The original formulation was given in terms of coverings of unit balls, but we prefer to keep the previous point of view for our interests.

The main result in \cite{cll} says that, given a Banach space $X$ and an infinite cardinal $\kappa$, $X$ contains an isomorphic copy of $\ell_1(\kappa^+)$ (where $\kappa^+$ stands for the succesor cardinal of $\kappa$) if, and only if, there exists an equivalent renorming on $X$ failing the $\alpha$-BCP$_{\kappa}$ for every $\alpha\in (-1,0)$. Moreover, the authors left as open question \cite[Question 6.1]{cll} whether the value $\alpha=-1$ can or not be reached. In other words, they asked the following.

\begin{question}\label{question:cll}
Let $\kappa$ be an infinite cardinal and let $X$ be a Banach space containing an isomorphic copy of $\ell_1(\kappa^+)$. Is there an equivalent renorming $|||\cdot|||$ on $X$ satisfying the following: for every subspace $Y$ of $X$ with $dens(Y)\leq \kappa$ there exists $x\in S_X$ so that
$$|||y+r x|||=|||y|||+\vert r\vert$$
holds for every $y\in Y$ and every $r\in\mathbb R$?
\end{question}

The main aim of this paper is to give a positive answer to the previous question. Actually, we prove a stronger version in the following sense.

\begin{theorem}\label{theo:maintheoremintro}
	\label{Theorem1}
	Given a Banach space $X$ and an uncountable cardinal $\kappa$, $X$ contains an isomorphic copy of $\ell_1(\kappa)$ if and only if there exists an equivalent renorming $|||\cdot|||$ on $X$ satisfying that, for every subspace $Y$ of $X$ with $dens(Y)<\kappa$, there exists $x\in S_X$ so that
	$$|||y+r x|||=|||y|||+\vert r\vert$$
holds for every $y\in Y$ and every $r\in\mathbb R$.
\end{theorem}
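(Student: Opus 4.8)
The plan is to prove both implications separately, with the harder direction being the construction of the renorming. For the easy direction, suppose $X$ has an equivalent norm $|||\cdot|||$ with the stated property. We build an isometric (in the new norm) copy of $\ell_1(\kappa)$ inside $X$ by transfinite recursion: having chosen norm-one vectors $(x_\beta)_{\beta<\alpha}$ for some $\alpha<\kappa$ that span an isometric $\ell_1$-copy, let $Y=\overline{\spann}\{x_\beta : \beta<\alpha\}$; since $\alpha<\kappa$ we have $dens(Y)\le|\alpha|<\kappa$, so the hypothesis furnishes $x_\alpha\in S_X$ with $|||y+rx_\alpha|||=|||y|||+|r|$ for all $y\in Y$, $r\in\R$. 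An immediate induction on finite linear combinations shows $\{x_\beta:\beta<\kappa\}$ is $1$-equivalent to the $\ell_1(\kappa)$-basis, so $X\supseteq\ell_1(\kappa)$ isomorphically (isometrically in $|||\cdot|||$).

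For the nontrivial direction, assume $X\supseteq\ell_1(\kappa)$ and fix such a subspace $E\cong\ell_1(\kappa)$ with unit basis $(e_\gamma)_{\gamma<\kappa}$. The idea is to adapt Godefroy's construction behind Theorem~\ref{theo:godefroy} but at the level of the cardinal $\kappa$, exploiting that $\kappa$ is uncountable so that a set of density $<\kappa$ only "uses" $<\kappa$ of the coordinates, and crucially that one can then pass to an $\varepsilon=0$ statement by a diagonal/limiting argument unavailable in the countable case. Concretely: first produce, via a Hahn--Banach/averaging argument on the dual, an equivalent norm $|||\cdot|||$ on $X$ that is octahedral with respect to the net of subspaces of density $<\kappa$ with a uniform constant $1-\varepsilon$ for each $\varepsilon>0$; this is where the $\ell_1(\kappa)$ structure enters, since for any $Y$ with $dens(Y)<\kappa$ there is a coordinate $\gamma$ with $e_\gamma$ "almost disjoint" from $Y$, and the renorming is designed to make such $e_\gamma$ witness near-octahedrality. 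Then, to upgrade from $1-\varepsilon$ to exact equality, observe that given $Y$ with $dens(Y)<\kappa$ we may enlarge $Y$ slightly and still stay below $\kappa$ (here uncountability of $\kappa$, hence cofinality considerations, matter): build a single vector $x\in S_X$ as a suitable limit (in an appropriate weak-star or coordinatewise sense inside $E$, e.g. a weak-star cluster point in $X^{**}$ that actually lands in $X$ because it is supported on a single remaining coordinate of $\ell_1(\kappa)$) of vectors $x_n$ each witnessing $(1-\tfrac1n)$-octahedrality for $Y$; the limit then satisfies $|||y+rx|||=|||y|||+|r|$ exactly.

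The main obstacle I expect is precisely this last upgrading step: producing one vector $x\in S_X$ that works simultaneously for \emph{all} $y\in Y$ and \emph{all} $r$ with exact equality, rather than a sequence of vectors with constants tending to $1$. In the countable case this fails (as the paper itself notes it will show), so the argument must genuinely use that $dens(Y)<\kappa$ leaves uncountably — in particular infinitely — many free coordinates of $\ell_1(\kappa)$, allowing a compactness argument within the closed linear span of those coordinates, which is itself an $\ell_1(\lambda)$ with $\lambda$ still large. One must be careful that the renorming $|||\cdot|||$ is chosen from the outset so that it restricts nicely to such coordinate subspaces (e.g. so that on the "tail" copy of $\ell_1$ it behaves like the sum norm up to the ambient equivalence), so that the limiting vector inherits the additivity. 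A secondary technical point is checking that the norm $|||\cdot|||$ built for one fixed $E\cong\ell_1(\kappa)$ does not depend on $Y$, i.e. the same renorming serves every $Y$ of density $<\kappa$; this should follow because the construction only refers to $E$ and the homogeneity of the coordinate structure of $\ell_1(\kappa)$, not to any particular subspace $Y$.
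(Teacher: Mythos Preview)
Your easy direction (transfinite recursion to build an isometric $\ell_1(\kappa)$-basis) is fine and essentially the same as the paper's Zorn's-lemma argument.

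The hard direction, however, has a genuine gap precisely where you yourself flag the obstacle. Your plan is: (i) produce an equivalent norm that is $(1-\varepsilon)$-octahedral over subspaces of density $<\kappa$, and then (ii) upgrade to exact additivity by taking a limit of witnesses $x_n$ for $(1-\tfrac1n)$. Step (ii) does not work as described. A weak-star cluster point of the $x_n$ in $X^{**}$ has no reason to lie in $X$; the phrase ``lands in $X$ because it is supported on a single remaining coordinate'' is incoherent, since the $x_n$ will typically come from \emph{different} coordinates of the $\ell_1(\kappa)$-copy and their cluster points need not be supported on any coordinate at all. Even if a limit $x$ existed in $X$, norm (or weak) continuity gives at best $|||y+rx|||\le\liminf|||y+rx_n|||$ in the wrong direction, and there is no mechanism ensuring the additivity passes to the limit with equality. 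That the upgrading step cannot be purely formal is underscored by the paper's Proposition~\ref{prop:differentia}: for $\kappa=\omega$ the $(1-\varepsilon)$-version holds (Godefroy) but the $\varepsilon=0$ version is impossible, so any argument that ``passes to the limit'' without using uncountability of $\kappa$ in a more substantive way is doomed.

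The paper's approach is entirely different and avoids this difficulty by never working with $1-\varepsilon$ at all. It chooses a concrete $1$-injective space $C(K)$, where $K$ is the Stone space of the completion $\overline{Fr(\kappa)}$ of the free Boolean algebra on $\kappa$ generators, and shows (Lemma~\ref{lem:C(K)completo}) that in this space the Rademacher-type functions $f_\alpha=\chi_{G_\alpha}-\chi_{G_\alpha^c}$ already satisfy \emph{exact} additivity $\|g+rf_\beta\|=\|g\|+|r|$ over any subspace of density $<\kappa$. The $1$-injectivity then lets one extend the isometry $\ell_1(\kappa)\hookrightarrow C(K)$ to an operator $T\colon X\to C(K)$, and the norm $|||x|||=\|T(x)\|+\|[x]\|_{X/Z}$ inherits exact additivity directly. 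The key idea you are missing is this use of a $1$-injective target with a built-in exact-octahedrality witness, rather than an approximation-then-limit scheme.
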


This solves, in particular, the problem posed by S. Ciaci, J. Langemets and A. Lissitsin \cite[Question 6.1]{cll}, in the particular case when we take $\kappa$ an infinite succesor cardinal. As we have pointed out they proved a weaker form of Theorem \ref{theo:maintheoremintro}, when $\kappa^+$ is a successor cardinal, for the $\alpha$-BCP$_{\kappa}$-property with $-1<\alpha<0$. Their proof relies on the renorming technique employed by V. Kadets, V. Shepelska and D. Werner in \cite[Theorem 4.3]{ksw}. These latter authors also mention, just after their Theorem 4.3, an alternative approach suggested to them by W. B. Johnson, that takes advantage of the fact that $L_\infty$ is 1-injective. Our proof follows that line of thought, but using different 1-injective spaces, coming from the completion of free Boolean algebras instead of measure algebras. However, we show in Proposition \ref{prop:comparacion} that, surprisingly, the renorming of \cite[Theorem 3.1]{cll} happens to be exactly the same as ours.

It could be wondered whether the cardinal $\kappa=\omega$ can be taken in Theorem~\ref{theo:maintheoremintro}, turning the condition $dens(Y)<\kappa$ into $Y$ being finite-dimensional. That would mean that we could improve Godefroy's result by making $\varepsilon=0$ in assertion (3) of Theorem \ref{theo:godefroy}. We observe in Proposition \ref{prop:differentia} that this is impossible since in every separable Banach space $X$ there exists $x\in S_X$ so that $\min\{\Vert x+y\Vert,\Vert x-y\Vert\}<2$ holds for every $y\in S_X$.

Throughout the text, we will deal with real Banach spaces. Given a Banach space $X$, we will denote by $B_X$ (respectively $S_X$) the closed unit ball (respectively the unit sphere). The rest of the necessary notation will be introduced when needed.

%
%
%
%
%
%
%
%
%
%


\section{Main results}\label{section:general}

Recall that a Banach space $X$ is \textit{1-injective} whenever every operator $T$ from a subspace of a Banach space $Y$ to $X$ can be extended to an operator $\hat{T}$ from $Y$ to $X$ with $\|\hat{T}\|=\|T\|$. A Banach space is $1$-injective if and only if it is isometric to a Banach space of continuous functions $C(K)$  with $K$ being an extremally disconnected compact Hausdorff space \cite{kelley}. These compact spaces are, in turn, characterized as those zero-dimensional compact spaces for which its clopen algebra is complete. Remember that, to any Boolean algebra $\mathcal B$ we can associate, by Stone duality, a zero-dimensional compact space $K$ whose algebra of clopen sets is isomorphic to $\mathcal B$. 
Given a subset $\mathcal{G} \subset \cB$ of a Boolean algebra $\cB$, we write $\langle \mathcal{G} \rangle$ to denote the Boolean subalgebra generated by $\mathcal{G}$. For any cardinal $\kappa$, we denote by $Fr(\kappa)=\langle \{G_\alpha: \alpha<\kappa\} \rangle$ the free Boolean algebra generated by $\kappa$ elements $G_\alpha$ (see \cite[Chapter 4]{monk}) and by $\overline{Fr(\kappa)}$ the canonical completion of $Fr(\kappa)$ \cite[Chapter 2, section 4]{monk}. The following elementary fact is related to the well-known property that the reaping number of $Fr(\kappa)$ equals $\kappa$ (see, e.g., \cite[Example 12]{monk2}). 
\begin{lemma}
\label{lemmaclopens}	
Let $\lambda<\kappa$ and $\{C_\alpha: \alpha < \lambda \}$ be a family of nonzero elements in $\overline{Fr(\kappa)}$. Then there is $\beta < \kappa$ such that $G_\beta \cap C_\alpha \neq \emptyset$ and $G_\beta^c \cap C_\alpha \neq \emptyset$ for every $\alpha<\lambda$.
\end{lemma}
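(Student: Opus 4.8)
The statement concerns the free Boolean algebra $Fr(\kappa) = \langle \{G_\alpha : \alpha < \kappa\}\rangle$ and its completion $\overline{Fr(\kappa)}$, and asks: given $\lambda < \kappa$ nonzero elements $C_0, \ldots, C_\alpha, \ldots$ of the completion, find a single free generator $G_\beta$ that "splits" all of them. The key structural fact about $Fr(\kappa)$ is that every nonzero element $a \in Fr(\kappa)$ already lies in the finitely generated subalgebra $\langle G_{\alpha_1}, \ldots, G_{\alpha_n}\rangle$ for some finite $F = \{\alpha_1, \ldots, \alpha_n\} \subset \kappa$, and for any $\beta \notin F$ the generator $G_\beta$ is "free over" that subalgebra, so both $a \cap G_\beta$ and $a \cap G_\beta^c$ are nonzero. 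The difficulty is that the $C_\alpha$ live in the completion, not in $Fr(\kappa)$ itself, so they need not have finite support.

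**Reducing to $Fr(\kappa)$.** Since $Fr(\kappa)$ is dense in $\overline{Fr(\kappa)}$ (that is what "canonical completion" gives us), for each $\alpha < \lambda$ I can choose a nonzero $D_\alpha \in Fr(\kappa)$ with $D_\alpha \leq C_\alpha$. It then suffices to find $\beta$ with $G_\beta \cap D_\alpha \neq 0$ and $G_\beta^c \cap D_\alpha \neq 0$ for all $\alpha$, since these inequalities pass upward to $C_\alpha$. Now each $D_\alpha$, being an element of $Fr(\kappa)$, has finite support: there is a finite set $F_\alpha \subset \kappa$ with $D_\alpha \in \langle G_\gamma : \gamma \in F_\alpha\rangle$. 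Here I use the cardinal arithmetic: $\bigcup_{\alpha < \lambda} F_\alpha$ is a union of $\lambda$ finite sets, hence has cardinality at most $\lambda \cdot \omega < \kappa$ (using $\lambda < \kappa$ and that $\kappa$ is a cardinal, in particular $\kappa$ is infinite — if $\lambda$ is finite this is obvious, if $\lambda$ is infinite then $\lambda \cdot \omega = \lambda < \kappa$). Therefore $S := \bigcup_{\alpha<\lambda} F_\alpha$ has size $< \kappa$, so there exists $\beta \in \kappa \setminus S$.

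**Checking that $G_\beta$ works.** Fix such a $\beta \notin S$ and fix $\alpha < \lambda$. The element $D_\alpha$ lies in the subalgebra $\mathcal{A}_\alpha := \langle G_\gamma : \gamma \in F_\alpha\rangle$, and $\beta \notin F_\alpha$. By the defining universal property of the free Boolean algebra, the generators $\{G_\gamma : \gamma \in F_\alpha \cup \{\beta\}\}$ are free, so $\langle G_\gamma : \gamma \in F_\alpha \cup \{\beta\}\rangle \cong \mathcal{A}_\alpha \otimes \langle G_\beta\rangle$ (the free product), and in such a free product the product $D_\alpha \wedge G_\beta$ of two nonzero elements is nonzero, and likewise $D_\alpha \wedge G_\beta^c \neq 0$. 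Concretely: writing $\mathcal{A}_\alpha$ via its Stone space, $D_\alpha$ corresponds to a nonempty clopen set, and adjoining the free generator $G_\beta$ splits every nonempty clopen set of the Stone space of $\mathcal{A}_\alpha$ into two nonempty pieces. Hence $G_\beta \cap D_\alpha \neq \emptyset$ and $G_\beta^c \cap D_\alpha \neq \emptyset$, and since $D_\alpha \leq C_\alpha$ the same holds with $C_\alpha$ in place of $D_\alpha$. As $\alpha$ was arbitrary, $\beta$ is the desired index.

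**Main obstacle.** The only genuine point requiring care is the passage from the completion down to $Fr(\kappa)$ and the bookkeeping of supports; once that is in place, the "freeness splits nonzero elements" fact is standard (it is, as the paper notes, essentially the statement that the reaping number of $Fr(\kappa)$ is $\kappa$). I would make sure to state cleanly that density of $Fr(\kappa)$ in $\overline{Fr(\kappa)}$ lets us drop below each $C_\alpha$, and that the cardinal bound $|\bigcup_\alpha F_\alpha| < \kappa$ is exactly where the hypothesis $\lambda < \kappa$ is consumed.
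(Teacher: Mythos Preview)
Your proof is correct and follows essentially the same approach as the paper: use density of $Fr(\kappa)$ in its completion to pass to elements $D_\alpha\in Fr(\kappa)$ below each $C_\alpha$, collect their finite supports $F_\alpha$, and pick $\beta$ outside the union $\bigcup_\alpha F_\alpha$. The paper's version is terser---it simply asserts that such a $\beta$ exists and that the splitting is ``immediate''---whereas you spell out the cardinal arithmetic $|\bigcup_\alpha F_\alpha|<\kappa$ and the freeness argument more explicitly, but there is no substantive difference.
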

\begin{proof}
By definition of the completion of a Boolean algebra \cite[Chapter 2, Definition 4.28]{monk}, $ Fr(\kappa)$ is dense in $\overline{Fr(\kappa)}$, so for every $\alpha < \lambda$ there is $C_\alpha' \in  Fr(\kappa)$ such that $\emptyset \neq C_\alpha' \subseteq C_\alpha$. Now, for every $\alpha < \lambda$ we can pick a finite set $F_\alpha$ such that $C_\alpha' \in \langle \{G_\beta: \beta\in F_\alpha\} \rangle $. Let $\beta < \kappa$ be any ordinal such that $\beta \notin \bigcup_{\alpha < \lambda}F_\alpha$. 
Now it is immediate that $G_\beta \cap C_\alpha \supseteq G_\beta \cap C_\alpha' \neq \emptyset$ and $G_\beta^c \cap C_\alpha \supseteq G_\beta^c \cap C_\alpha' \neq \emptyset$ for every $\alpha<\lambda$ as desired.
\end{proof}


This has the following consequence at the level of the space of continuous functions.

\begin{lemma}\label{lem:C(K)completo}
	Let $\kappa$ be an uncountable cardinal and $K$ be the compact topological space associated by Stone duality to $\overline{Fr(\kappa)}$. Then $C(K)$ contains a family of functions $\{f_\alpha: \alpha< \kappa\}$ isometrically equivalent to the canonical vector basis of $\ell_1(\kappa)$ which satisfies that for every subspace $Y$ with $dens(Y)< \kappa$ there exists $\beta$ so that
	$$\Vert g+ r f_\beta \Vert=\Vert g\Vert + |r|$$
	holds for every $g\in Y$ and every scalar $r\in\mathbb{R}$.
\end{lemma}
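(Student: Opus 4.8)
The plan is to take $f_\alpha := \chi_{G_\alpha} - \chi_{K\setminus G_\alpha}$, the $\pm 1$-valued continuous function on $K$ determined by the clopen set corresponding to the generator $G_\alpha\in\overline{Fr(\kappa)}$. First I would check that $\{f_\alpha:\alpha<\kappa\}$ is isometrically equivalent to the $\ell_1(\kappa)$-basis: the inequality $\|\sum_i r_i f_{\alpha_i}\|_\infty\leq\sum_i|r_i|$ is just the triangle inequality together with $\|f_{\alpha_i}\|=1$, and for the reverse inequality one uses that, since the $G_\alpha$ are free generators of $Fr(\kappa)$ sitting inside $\overline{Fr(\kappa)}$ as a subalgebra, the element $\bigcap_i G_{\alpha_i}^{\varepsilon_i}$ with $\varepsilon_i=\sgn(r_i)$ (writing $G^+=G$, $G^-=K\setminus G$) is nonzero, hence contains a point $t$ with $f_{\alpha_i}(t)=\sgn(r_i)$ for all $i$, so that $\sum_i r_i f_{\alpha_i}(t)=\sum_i|r_i|$.

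The key reduction is the following elementary observation, which I would spell out by a short case check: for a fixed $\beta$ and $g\in C(K)$, if $\sup_{G_\beta}g=\sup_{K\setminus G_\beta}g=\sup_K g$ and $\inf_{G_\beta}g=\inf_{K\setminus G_\beta}g=\inf_K g$, then $\|g+rf_\beta\|=\|g\|+|r|$ for every $r\in\R$. Indeed $\leq$ is automatic, and for $\geq$ one notes that $g+rf_\beta$ equals $g+r$ on $G_\beta$ and $g-r$ on $K\setminus G_\beta$, so splitting according to the sign of $r$ and to whether $\|g\|$ equals $\sup_K g$ or $-\inf_K g$, in each of the four cases one of the suprema of $\pm g$ over $G_\beta$ or over $K\setminus G_\beta$ already realizes $\|g\|+|r|$.

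Now fix $Y$ with $dens(Y)=\lambda<\kappa$ and choose a dense $D\subseteq Y$ with $|D|=\lambda$. Since $g\mapsto\|g+rf_\beta\|$ and $g\mapsto\|g\|+|r|$ are continuous, it is enough to produce a single $\beta$ valid for all $g\in D$, that is, by the reduction, a $\beta$ such that $G_\beta$ and $K\setminus G_\beta$ each meet $A^+_{g,n}:=\{t: g(t)>\sup_K g-1/n\}$ and $A^-_{g,n}:=\{t: g(t)<\inf_K g+1/n\}$ for every $g\in D$ and $n\in\N$; this gives $\sup_{G_\beta}g\geq\sup_K g-1/n$ for all $n$, and similarly for the other three quantities. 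Each $A^{\pm}_{g,n}$ is a nonempty open subset of the zero-dimensional compact space $K$, hence contains a nonzero clopen set $C^{\pm}_{g,n}\in\overline{Fr(\kappa)}$. The family $\{C^{\pm}_{g,n}: g\in D,\ n\in\N\}$ has at most $\max\{\lambda,\aleph_0\}<\kappa$ members (this is where uncountability of $\kappa$ is used), so Lemma~\ref{lemmaclopens} yields $\beta<\kappa$ with $G_\beta\cap C^{\pm}_{g,n}\neq\emptyset$ and $(K\setminus G_\beta)\cap C^{\pm}_{g,n}\neq\emptyset$ for all $g,n$; a fortiori the same holds with $A^{\pm}_{g,n}$ in place of $C^{\pm}_{g,n}$, and $f_\beta$ is the desired function.

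I expect no serious obstacle here once Lemma~\ref{lemmaclopens} is available; the only points requiring care are (i) reducing from a dense subset $D$ to all of $Y$, where the cardinal estimate $\max\{\lambda,\aleph_0\}<\kappa$ genuinely needs $\kappa$ uncountable, and (ii) working with the $1/n$-superlevel and sublevel neighborhoods rather than the exact sets on which $g$ attains its extrema, since the latter need not be open and so need not contain clopen refinements.
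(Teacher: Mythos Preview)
Your proposal is correct and follows essentially the same route as the paper: the same functions $f_\alpha=\chi_{G_\alpha}-\chi_{G_\alpha^c}$, the same reduction to a dense set of cardinality $\max\{\lambda,\aleph_0\}<\kappa$, the same passage to nonempty clopens inside the near-extremal level sets, and the same appeal to Lemma~\ref{lemmaclopens}. The only difference is cosmetic: you track the $\sup$- and $\inf$-neighborhoods $A^{\pm}_{g,n}$ separately and spell out the four-case check for $\|g+rf_\beta\|=\|g\|+|r|$, whereas the paper works with a single clopen $C_\alpha^n\subseteq\{|g_\alpha|>\|g_\alpha\|-1/n\}$ and leaves that verification as a ``routine computation''; your version makes that routine step more transparent.
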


\begin{proof}
	We view the elements of $\overline{Fr(\kappa)}$ as clopen subsets of $K$. Let $f_\alpha=\chi_{G_\alpha}-\chi_{G_\alpha^c}$ for every $\alpha<\kappa$, where by $\chi_C$ we denote the characteristic function of the clopen $C$.
	Since the family $\{G_\alpha: \alpha< \kappa\}$ is independent, we have that $\{f_\alpha: \alpha< \kappa\}$ is isometrically equivalent to the canonical vector basis of $\ell_1(\kappa)$. 
	
	Pick a subspace $Y$ of $C(K)$ with $ dens(Y)=\lambda < \kappa$ and take $\{g_\alpha: \alpha<\lambda\}$ a dense subset of $Y$. Since $\kappa$ is uncountable, we can suppose without loss of generality that $\lambda\geq \omega$. For every $\alpha<\kappa$ we can take a sequence of clopens $C_\alpha^n$ such that $\|g_\alpha\|-\frac{1}{n}<|g_\alpha(t)| \leq \|g_\alpha\|$ for every $t\in C_\alpha^n$ and every $n\in \N$. Then, the family $\{C_\alpha^n: \alpha<\lambda,~n\in \N\}$ has cardinality at most $\lambda$, so by Lemma \ref{lemmaclopens} there is $\beta<\kappa$ such that $G_\beta \cap C_\alpha^n \neq \emptyset$ and $G_\beta^c \cap C_\alpha^n \neq \emptyset$ for every $\alpha<\lambda$ and every $n\in \N$.
	A routine computation shows that $f_\beta$ satisfies that $\Vert g_\alpha+r f_\beta \Vert=\Vert g_\alpha\Vert+|r|$ for every $\alpha<\lambda$.
	Now the conclusion follows from the density of $\{g_\alpha\}$ in $Y$. 
\end{proof}

Since $\overline{Fr(\kappa)}$ is complete, the space $C(K)$ is $1$-injective, and this allows to transfer the norm in Lemma \ref{lem:C(K)completo} to any Banach space containing $\ell_1(\kappa)$.

\begin{theorem}\label{theo:almostmain}
Given any uncountable cardinal $\kappa$ and any Banach space $X$ containing an isomorphic copy of $\ell_1(\kappa)$, there exists an equivalent norm $|||\cdot|||$ on $X$ such that for every subspace $Y$ of $X$ with $dens(Y)< \kappa$ there exists $x\in S_X$ so that
$$|||y+r x|||=|||y||| + |r|$$
holds for every $y\in Y$ and every $r\in \R$. Moreover, $x$ can be taken to be one of the vectors of the canonical basis of the copy of $\ell_1(\kappa)$ in $X$.
\end{theorem}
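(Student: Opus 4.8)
The plan is to manufacture the norm from the isometric copy of $\ell_1(\kappa)$ sitting inside $C(K)$. Fix an isomorphic embedding $T\colon\ell_1(\kappa)\to X$, put $z_\alpha:=T(e_\alpha)$ and $Z:=T(\ell_1(\kappa))$, a closed subspace of $X$; the $z_\alpha$ are the ``canonical basis'' referred to in the statement. Let $U\colon\ell_1(\kappa)\to C(K)$ be the map $e_\alpha\mapsto f_\alpha$, which is an isometry onto its range by Lemma~\ref{lem:C(K)completo}. Since $K$ is extremally disconnected, $C(K)$ is $1$-injective, so the bounded operator $UT^{-1}\colon Z\to C(K)$ extends to an operator $S\colon X\to C(K)$ with $\|S\|=\|T^{-1}\|$; in particular $S(z_\alpha)=f_\alpha$. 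Then define
$$|||x|||:=\|S(x)\|_{C(K)}+\dist_X(x,Z)\qquad(x\in X).$$
Equivalently, $|||\cdot|||$ is the norm induced on $X$ by the injective linear map $x\mapsto(S(x),\,x+Z)$ into $C(K)\oplus_1(X/Z)$; the $\ell_1$-sum is exactly what makes the key identity below additive.

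First I would verify that $|||\cdot|||$ is an equivalent norm. It is obviously a seminorm, and it is definite because $|||x|||=0$ forces $x\in Z$ and then $S(x)=UT^{-1}(x)=0$, hence $x=0$ since $UT^{-1}$ is injective on $Z$. The bound $|||x|||\le(\|T^{-1}\|+1)\|x\|_X$ is clear. For the reverse bound, given $x$ pick $z\in Z$ with $\|x-z\|_X$ close to $\dist_X(x,Z)$; then $\|z\|_X\le\|T\|\,\|T^{-1}z\|_1=\|T\|\,\|S(z)\|_{C(K)}\le\|T\|\big(\|T^{-1}\|\,\|x-z\|_X+\|S(x)\|_{C(K)}\big)$, so $\|x\|_X\le\|x-z\|_X+\|z\|_X\le C\,|||x|||$ for a constant $C$ depending only on $\|T\|$ and $\|T^{-1}\|$. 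Also $|||z_\alpha|||=\|f_\alpha\|_{C(K)}+0=1$, so each $z_\alpha$ lies on the unit sphere of the new norm.

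The main step is then immediate. Given a subspace $Y\subseteq X$ with $dens(Y)<\kappa$, the subspace $S(Y)\subseteq C(K)$ also has density $<\kappa$ (continuous image), so Lemma~\ref{lem:C(K)completo} yields $\beta<\kappa$ with $\|g+rf_\beta\|_{C(K)}=\|g\|_{C(K)}+|r|$ for all $g\in\overline{S(Y)}$ and all $r\in\R$. Take $x:=z_\beta$. Since $S(z_\beta)=f_\beta$ and $rz_\beta\in Z$, we get, for $y\in Y$ and $r\in\R$,
$$|||y+rz_\beta|||=\|S(y)+rf_\beta\|_{C(K)}+\dist_X(y+rz_\beta,Z)=\big(\|S(y)\|_{C(K)}+|r|\big)+\dist_X(y,Z)=|||y|||+|r|,$$
and $z_\beta$ is one of the basis vectors of the copy of $\ell_1(\kappa)$, which is what is claimed.

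The point to get right — and the reason a more naive attempt fails — is that the extension $S$ need not be injective or bounded below away from $Z$, so $\|S(\cdot)\|_{C(K)}$ by itself is only a seminorm; adding $\dist_X(\cdot,Z)$ repairs definiteness and equivalence, and it does no harm to the key identity precisely because it is unchanged when one adds a multiple of $z_\beta\in Z$. It is equally essential that the correction be \emph{added} (this is the $\oplus_1$ feature): replacing $\dist_X(\cdot,Z)$ by $\varepsilon\|\cdot\|_X$ or by $\max\{\|S(\cdot)\|_{C(K)},\varepsilon\|\cdot\|_X\}$ would destroy the exact equality $|||y+rz_\beta|||=|||y|||+|r|$. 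Apart from this, the argument uses only the $1$-injectivity of $C(K)$ (to produce $S$) and Lemma~\ref{lem:C(K)completo} (to produce $\beta$), so the only genuine computation is the norm-equivalence estimate above.
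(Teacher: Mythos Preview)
Your proof is correct and follows essentially the same approach as the paper: extend the map $e_\alpha\mapsto f_\alpha$ from the copy of $\ell_1(\kappa)$ to all of $X$ using the $1$-injectivity of $C(K)$, and define $|||x|||=\|S(x)\|_{C(K)}+\|[x]\|_{X/Z}$. The only cosmetic difference is that the paper first renorms $X$ so that the copy of $\ell_1(\kappa)$ is isometric (hence the extended operator has norm $1$ and the equivalence constants become $\tfrac{1}{3}$ and $2$), whereas you keep the isomorphic copy and carry $\|T\|,\|T^{-1}\|$ through the estimates.
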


\begin{proof}
	Assume that $X$ contains an isomorphic copy of $\ell_1(\kappa)$. Renorming $X$ if necessary, we can assume that $X$ contains an isometric copy of $\ell_1(\kappa)$. We denote by $\{e_\alpha: \alpha < \kappa\}$ the canonical basis of $\ell_1(\kappa)$ inside $X$. Let $K$ be the compact topological space associated to the Boolean algebra $\overline{Fr(\kappa)}$. By Lemma \ref{lem:C(K)completo}, there exists a transfinite sequence $\{f_\alpha\}_{\alpha<\kappa}$ in $C(K)$, isometric to the canonical basis of $\ell_1(\kappa)$, such that for every subspace $Y$ of $C(K)$ with $dens(Y)<\kappa$, there exists $\beta<\kappa$ with
	$$\Vert g+ r f_\beta\Vert=\Vert g\Vert + |r|$$
	 for every $g\in Y$ and every $r \in \R$.
	We have an isometric embedding $$T\colon Z:=\overline{span}(\{e_\alpha: \alpha<\kappa\})\longrightarrow C(K)$$ such that $T(e_\alpha)=f_\alpha$ for all $\alpha<\kappa$.
	Since $C(K)$ is 1-injective, there exists a norm one extension  $T:X\longrightarrow C(K)$ that we still denote by $T$. We define a norm on $X$ by the formula
	$$|||x|||:=\Vert T(x)\Vert+\Vert [x]\Vert_{X/Z}.$$
	Let us check that this is equivalent to the original norm of $X$. On the one hand,
	$$|||x|||\leq \Vert T\Vert \Vert x\Vert+\inf\{\Vert y\Vert: y-x\in Z\}\leq \Vert x\Vert+\Vert x\Vert=2\Vert x\Vert.$$
	On the other hand, we claim that $|||x|||\geq \frac{1}{3}\Vert x\Vert$ for all $x\in X$. Otherwise, we could find $x$ such that $\Vert x\Vert=1$ but $|||x|||<1/3$. In particular we would have $\Vert [x]\Vert_{X/Z}< \frac{1}{3}$, so there exists $z\in Z$ so that $\Vert x-z\Vert<\frac{1}{3}$. Then $|||z|||=\Vert T(z)\Vert=\Vert z\Vert\geq \frac{2}{3}$ and therefore
	$$|||x|||\geq \Vert T(x)\Vert\geq \Vert T(z)\Vert-\Vert T(x-z)\Vert\geq \frac{2}{3}-\Vert x-z\Vert\geq \frac{1}{3},$$
	a contradiction. So $\Vert\cdot\Vert$ and $|||\cdot|||$ are equivalent norms, and we pass to the proof of the main statement.
	
	Take a subspace $Y$ of $X$ with $dens(Y)<\kappa$. Since $\overline{T(Y)}$ is a subspace of $C(K)$ with density character $<\kappa$, by Lemma \ref{lem:C(K)completo} we can find $\beta<\kappa$ such that
	$$\Vert T(y)+r f_\beta\Vert=\Vert T(y)\Vert + |r|$$
	for every $r\in\mathbb R$ and every $y\in Y$. Now, given $y\in Y$ and $r\in\mathbb R$, we get $$\Vert T(y)+ r T(e_\beta)\Vert=\Vert T(y)+r f_\beta\Vert=\Vert T(y)\Vert+\vert r \vert,$$ $$\Vert [y+r e_\beta]\Vert_{X/Z}=\Vert [y]\Vert_{X/Z} \text{ since } r e_\beta\in Z.$$ Applying the definition of the norm $|||\cdot|||$, we join both formulas together to obtain
	$$|||y+r e_\beta|||=\Vert T(y)\Vert + |r| + \Vert [y]\Vert_{X/Z}=|||y||| + |r|.$$
\end{proof}

Theorem \ref{theo:almostmain} already gives one implication of our Theorem \ref{Theorem1}. The converse implication is the easy one and follows from the same argument as in \cite[Proposition 23]{glm} and \cite[Proposition 3.2]{cll}. We include it here for the sake of completeness. Suppose that $(X,|||\cdot|||)$ satisfies that, for every subspace $Y$ of $X$ with $dens(Y)<\kappa$ there exists $x\in S_X$ so that $|||y+ r x|||=|||y|||+\vert r\vert$ for every $y\in Y$ and $r\in\mathbb R$. Consider $\mathcal{P}$ the poset of all subsets $A$ in $X$ isometric to the canonical basis of $\ell_1(A)$. It is immediate that $\mathcal{P}$ is nonempty (it contains singletons in the sphere). Moreover, it contains the union of any chain in $\mathcal{P}$ (ordered by inclusion). By Zorn's lemma, there exists a maximal element $A \in \mathcal{P}$. We claim that the cardinality of $A$ is at least $\kappa$. Otherwise  we can find $x \in S_X$ such that $|||y+\lambda x|||=|||y|||+|\lambda|$ for every $y \in \overline{span}(A)$ and every $\lambda \in \R$. Then $A \cup \{x\} \in \mathcal{P}$ would contradict the maximality of $A$.

Finally, notice that, in Theorem \ref{theo:almostmain} we have excluded the case $\kappa=\omega$. Actually, the following result shows that no version of Theorem \ref{theo:almostmain} can hold for $\kappa=\omega$. 

\begin{proposition}\label{prop:differentia}
Let $X$ be a separable Banach space. Then there exists a point $x\in S_X$ so that, for every $y\in S_X$, then 
$$\min\{\Vert x+y\Vert, \Vert x-y\Vert\}<2.$$
\end{proposition}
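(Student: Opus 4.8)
The plan is to reduce the statement to the existence of a \emph{smooth point} of the unit sphere of $X$. Recall that $x\in S_X$ is smooth when there is a unique $f\in S_{X^*}$ with $f(x)=1$. I would first observe that any smooth point automatically has the property claimed in the proposition. Indeed, suppose $x$ is smooth with unique norming functional $f$, and suppose toward a contradiction that some $y\in S_X$ satisfied $\Vert x+y\Vert=2$ and $\Vert x-y\Vert=2$. Choosing, via Hahn--Banach, norming functionals $f_1,f_2\in S_{X^*}$ with $f_1(x+y)=2$ and $f_2(x-y)=2$, and using $f_i(x)\le 1$, $f_i(\pm y)\le 1$, one is forced to have $f_1(x)=f_1(y)=1$ and $f_2(x)=1$, $f_2(y)=-1$. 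In particular $f_1$ and $f_2$ both norm $x$, so $f_1=f_2=f$ by uniqueness, giving $f(y)=1$ and $f(y)=-1$, a contradiction. Since $\Vert x\pm y\Vert\le \Vert x\Vert+\Vert y\Vert=2$ always holds, this yields $\min\{\Vert x+y\Vert,\Vert x-y\Vert\}<2$ for every $y\in S_X$.

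So the whole argument comes down to producing a single smooth point in $S_X$, and this is exactly where separability is used. I would invoke Mazur's classical theorem: a continuous convex function on a separable Banach space is Gateaux differentiable at the points of a dense $G_\delta$ subset. Applying this to the norm $\Vert\cdot\Vert$ (which is continuous and convex, and whose Gateaux differentiability at a point $x_0\neq 0$ is precisely the smoothness of $x_0/\Vert x_0\Vert$), one gets that the set of smooth points of $S_X$ is nonempty — in fact residual. Fixing any such point $x$ and feeding it into the observation of the previous paragraph finishes the proof.

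The main, and essentially only, obstacle is the first step: recognizing that smoothness of $x$ is the right condition to impose, after which the verification is a two-line computation with supporting functionals and the rest is a citation. An alternative to quoting Mazur's theorem would be to construct an exposed point of $B_X$ by hand out of a countable dense subset of $S_X$, but I expect the route through Gateaux differentiability of the norm to be the cleanest to write up. I would therefore organize the proof as: (i) the smooth-point observation, (ii) the appeal to Mazur's theorem via separability, and (iii) the assembly of the conclusion.
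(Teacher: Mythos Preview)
Your proof is correct and follows essentially the same approach as the paper: both pick a smooth point of $S_X$ (equivalently, a point of G\^ateaux differentiability of the norm, supplied in the separable case by Mazur's theorem) and then run the identical Hahn--Banach/uniqueness contradiction with the norming functionals $f_1,f_2$. The only cosmetic difference is the order of presentation; the mathematical content coincides.
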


\begin{proof}
Since $X$ is separable, it is known that the norm of $X$ is G\^ateaux differentiable at some point $x\in S_X$ \cite[Theorem 8.14]{fab} which implies, by classical Smulyan lemma \cite[Lemma 8.4]{fab}, that there exists only one $f\in S_{X^*}$ so that $f(x)=1$. We claim that the point $x$ satisfies our purposes. If we assume the contrary, there exists $y\in S_X$ so that $\Vert x\pm y\Vert=2$. By Hahn-Banach theorem, take $f_{\pm}\in S_{X^*}$ so that $f_{\pm}(x\pm y)=2$. It is clear that $f_\pm (x)=1$ which implies that $f_+=f=f_-$ because of the differentiability condition on $x$. However, we get that $f(x\pm y)=2$, from where $f(\pm y)=1$, contradicting the linearity of $f$. 
\end{proof}

We finish the paper by comparing the renorming techniques given in \cite[Proposition 3.3]{cll} and in our Theorem \ref{theo:almostmain}. The former, omitting unnecessary restrictions on successor cardinals, is described as follows: let $X$ be a Banach space containing an isomorphic copy of $\ell_1(\kappa)$. Assume, with no loss of generality, that there exists a subspace $Y$ on $X$ which is isometrically isomorphic to $\ell_1(\kappa)$. Define  $\mathcal P$ as the set of those seminorms $q:X\longrightarrow \mathbb R$ satisfying that $q(y)=\Vert y\Vert$ holds for every $y\in Y$ and $q(x)\leq \Vert x\Vert$ holds for every $x\in X$. Pick a minimal element $p\in\mathcal P$. Then, the equivalent norm given in \cite[Proposition 3.3]{cll} is given by the following equation
$$|||x|||:=p(x)+\Vert [x]\Vert_{X/Y}.$$
We will prove that the previous renorming technique is covered by the renorming given in Theorem \ref{theo:almostmain} in the following sense.

\begin{proposition}\label{prop:comparacion}
Let $X$ be a Banach space containing an isometric copy of $\ell_1(\kappa)$ (say $Y:=\overline{span}\{e_\alpha\}_{\alpha\in\kappa}$) and let $p:X\longrightarrow \mathbb R$ be a minimal seminorm in $\mathcal P$.

Consider the compact topological space $K$ and the family of functions $\{f_\alpha\}_{\alpha<\kappa}$ described in Lemma \ref{lem:C(K)completo}. Then there exists a norm-one operator $T:X\longrightarrow C(K)$ such that
\begin{itemize}
\item $T(e_\alpha)=f_\alpha$ holds for every $\alpha<\kappa$.
\item $\Vert T(x)\Vert= p(x)$ holds for every $x\in X$.
\end{itemize}
\end{proposition}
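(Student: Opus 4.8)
The plan is to build the operator $T$ directly from the minimal seminorm $p$, rather than starting from a $1$-injective extension and proving it agrees with $p$. First I would recall the structure of the space $C(K)$ with $K$ dual to $\overline{Fr(\kappa)}$: since $Fr(\kappa)$ is the free Boolean algebra on generators $\{G_\alpha\}_{\alpha<\kappa}$, its Stone space $K_0$ is literally $\{0,1\}^\kappa$, and $C(K)$ is the $1$-injective envelope sitting above $C(K_0)$. A point of $K_0$ is a choice of sign pattern, i.e.\ an element $\omega\in\{-1,1\}^\kappa$, and under the identification the generator function $f_\alpha = \chi_{G_\alpha}-\chi_{G_\alpha^c}$ is exactly the $\alpha$-th coordinate evaluation $\omega\mapsto \omega(\alpha)$. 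So a norm-one operator $T\colon X\to C(K_0)$ with $T(e_\alpha)=f_\alpha$ is the same as a family $(\phi_\omega)_{\omega\in\{-1,1\}^\kappa}$ of norm-$\le 1$ functionals on $X$ with $\phi_\omega(e_\alpha)=\omega(\alpha)$, and then $\|T(x)\| = \sup_\omega |\phi_\omega(x)|$; composing with the $1$-injective extension $C(K_0)\hookrightarrow C(K)$ does not change this sup-norm.

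So the real task reduces to: produce, for every sign pattern $\omega$, a functional $\phi_\omega\in B_{X^*}$ with $\phi_\omega(e_\alpha)=\omega(\alpha)$ for all $\alpha$, in such a way that $\sup_\omega|\phi_\omega(x)| = p(x)$ for every $x\in X$. For the inequality $\sup_\omega|\phi_\omega(x)|\le p(x)$ it suffices that each $\phi_\omega$ satisfies $|\phi_\omega(x)|\le p(x)$; since $p\le\|\cdot\|$ this is a strengthening of $\phi_\omega\in B_{X^*}$, and one gets such functionals by Hahn--Banach applied to the seminorm $p$: the restriction of any such functional to $Y$ has $p$-norm $\le 1$, but it also needs $p$-norm controlled from below so that it can realize the value $\omega(\alpha)$ on $e_\alpha$ — here one uses that $p(e_\alpha)=\|e_\alpha\|=1$ on $Y$, so the linear functional sending $\sum r_\alpha e_\alpha \mapsto \sum r_\alpha\omega(\alpha)$ has $p$-norm exactly $1$ on $Y$ (because $p|_Y = \|\cdot\|_{\ell_1(\kappa)}$ and that functional is in the dual ball of $\ell_1(\kappa)$) and extends to all of $X$ with $p$-norm $\le 1$. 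That handles "$\le$" cleanly.

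The harder direction is $\sup_\omega |\phi_\omega(x)| \ge p(x)$, and this is exactly where the minimality of $p$ must enter — nothing said so far has used it. My plan is an argument by contradiction exploiting minimality: suppose for some $x_0$ we have $\sup_\omega|\phi_\omega(x_0)| = c < p(x_0)$. Consider the seminorm $q(x) := \max\{\,\sup_\omega|\phi_\omega(x)|,\ \text{(something that still dominates $\|\cdot\|$ on $Y$)}\,\}$; the point is to manufacture from the $\phi_\omega$'s a seminorm $q\in\mathcal P$ (i.e.\ $q\le\|\cdot\|$ on $X$ and $q=\|\cdot\|$ on $Y$) with $q\le p$ everywhere and $q(x_0)<p(x_0)$, contradicting minimality of $p$. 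To make $q=\|\cdot\|$ on $Y$ we need enough functionals: for $y=\sum r_\alpha e_\alpha\in Y$ the norm is $\sum|r_\alpha|$, and choosing $\omega(\alpha)=\sgn(r_\alpha)$ gives $\phi_\omega(y)=\sum|r_\alpha|=\|y\|$ for finitely supported $y$, hence (by density) $\sup_\omega|\phi_\omega(y)|=\|y\|$ for all $y\in Y$ — but only if we are allowed to choose, for each finitely supported $y$, a $\phi_\omega$ matching its signs, which we are, since we defined one $\phi_\omega$ per sign pattern. Thus $q:=\sup_\omega|\phi_\omega(\cdot)|$ already lies in $\mathcal P$ and satisfies $q\le p$, so minimality forces $q=p$, i.e.\ $\sup_\omega|\phi_\omega(x)|=p(x)$ for all $x$, which is the claim.

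The main obstacle I anticipate is the bookkeeping around "one functional per sign pattern" versus "a functional adapted to each $x$": the Hahn--Banach extensions $\phi_\omega$ depend on choices, and one must be sure that the single family $(\phi_\omega)_{\omega\in\{-1,1\}^\kappa}$ simultaneously (a) stays in the ball dominated by $p$, (b) recovers $\|\cdot\|$ on all of $Y$ (not just finitely supported vectors — this needs the density/continuity step), and (c) yields $T(e_\alpha)=f_\alpha$ literally, which pins down $\phi_\omega(e_\alpha)$ and so constrains the extension. A secondary point to get right is the passage from $C(K_0)=C(\{0,1\}^\kappa)$ up to $C(K)=C(\text{Stone of }\overline{Fr(\kappa)})$: one should note the canonical isometric inclusion $C(K_0)\hookrightarrow C(K)$ (restriction dual to the dense embedding $Fr(\kappa)\hookrightarrow\overline{Fr(\kappa)}$ at the Stone level) carries each coordinate function to the $f_\alpha$ of Lemma~\ref{lem:C(K)completo} and preserves norms, so defining $T$ into $C(K_0)$ and then composing gives the required $T\colon X\to C(K)$ with no change to $\|T(x)\|$. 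With these identifications in hand, the proof is essentially the Hahn--Banach construction above plus the one-line minimality argument.
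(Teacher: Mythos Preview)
There is a genuine gap in the step where you assemble the family $(\phi_\omega)_{\omega\in\{-1,1\}^\kappa}$ into an operator $T_0\colon X\to C(K_0)$. A bounded linear operator into $C(K_0)$ is indeed the same thing as a weak$^*$-\emph{continuous} map $K_0\to B_{X^*}$, but your $\phi_\omega$'s are produced by independent applications of Hahn--Banach, one for each sign pattern, and nothing in that construction forces $\omega\mapsto\phi_\omega(x)$ to be continuous on $\{-1,1\}^\kappa$. In general it will not be: for $x\notin Y$ the value $\phi_\omega(x)$ depends on an arbitrary choice of extension, and there is no reason for these choices to vary continuously with $\omega$. So your $T_0$ lands only in $\ell_\infty(\{-1,1\}^\kappa)$, not in $C(K_0)$, and the isometric inclusion $C(K_0)\hookrightarrow C(K)$ is of no help.

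The natural repair is to accept that you have an operator $G\colon X\to \ell_\infty(\{-1,1\}^\kappa)$ with $\Vert G(x)\Vert\le p(x)$ and $G(e_\alpha)=(\text{$\alpha$-th coordinate function})$, and then invoke the $1$-injectivity of $C(K)$ to extend the isometry from $\overline{\spann}\{\text{coordinate functions}\}\cong\ell_1(\kappa)$ into $C(K)$ (sending coordinate functions to the $f_\alpha$'s) to a norm-one operator $\phi\colon \ell_\infty(\{-1,1\}^\kappa)\to C(K)$; setting $T=\phi\circ G$ then gives $T(e_\alpha)=f_\alpha$ and $\Vert T(x)\Vert\le p(x)$, and your minimality argument finishes the job. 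But note that once you do this you have reproduced the paper's proof almost verbatim: the paper uses $\ell_\infty(\kappa)$ (with $\kappa$ well-chosen sign patterns) rather than $\ell_\infty(\{-1,1\}^\kappa)$, then the same $1$-injective extension $\phi$ and the same minimality step. Your final paragraph---showing $q:=\sup_\omega|\phi_\omega(\cdot)|$ lies in $\mathcal P$ and is below $p$, hence equals $p$---is correct and is exactly the paper's closing argument, but applied to $\Vert T(\cdot)\Vert$ rather than to $\sup_\omega|\phi_\omega(\cdot)|$ (the former may a priori be smaller, so the minimality argument has to be run for it).
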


As a consequence, the renorming given in \cite[Proposition 3.3]{cll} is of the form given in the proof of Theorem \ref{theo:almostmain}.

\begin{proof}
Pick a sequence $\{q_\alpha\}_{\alpha<\kappa}$ in $S_{\ell_\infty(\kappa)}$ which is isometric to the $\ell_1(\kappa)$-basis. Define $g:Y\longrightarrow \ell_\infty(\kappa)$ by the equation
$$g(e_\alpha)=q_\alpha\ \forall \alpha<\kappa.$$
By the definition of $\ell_\infty(\kappa)$ there are, for every $\alpha<\kappa$, functionals $g_\alpha:Y\longrightarrow \mathbb R$ such that $g(y)=(g_\alpha(y))_\alpha\in\ell_\infty(\kappa)$. The operator $g$ is clearly an isometric embedding since $(e_\alpha)$ and $(q_\alpha)$ are isometric to the $\ell_1(\kappa)$ basis. Consequently, we obtain that
$$\sup\limits_{\alpha<\kappa} \vert g_\alpha(y)\vert=\Vert y\Vert=p(y)$$
holds for every $y\in Y$ so, in particular $\vert g_\alpha(y)\vert\leq p(y)$ for every $y\in Y$ and every $\alpha<\kappa$. Now, for every $\alpha < \kappa$ there exists, by the Hahn-Banach theorem \cite[Theorem 2.1]{fab}, a linear map $G_\alpha:X\longrightarrow \mathbb R$ so that its restriction to $Y$ is $G_{\alpha}|_Y=g_\alpha$ and $\vert G_\alpha(x)\vert\leq p(x)$ for every $x\in X$. This allows to define $G:X\longrightarrow \ell_\infty(\kappa)$ by 
$$G(x)=(G_\alpha(x))_\alpha.$$
From the inequality $\vert G_\alpha(x)\vert\leq p(x)$ we get $\Vert G(x)\Vert\leq p(x) \leq \|x\|$ for every $x\in X$. 

Now, the fact that $C(K)$ is 1-injective implies that there exists a norm-one operator $\phi:\ell_\infty(\kappa)\longrightarrow C(K)$ satisfying that $\phi(q_\alpha)=f_\alpha$ holds for every $\alpha < \kappa$. Consider finally $T=\phi\circ G:X\longrightarrow C(K)$. On the one hand, given $\alpha\in\kappa$, we get
$$T(e_\alpha)=\phi(G(e_\alpha))=\phi(q_\alpha)=f_\alpha.$$
On the other hand, given $x\in X$, we get
$$\Vert T(x)\Vert\leq \Vert \phi\Vert\Vert G(x)\Vert\leq p(x).$$
Now if we define $q(x):=\Vert T(x)\Vert$, by the properties exposed above we get that $q\in\mathcal P$ and that $q(x)\leq p(x)$. By the minimality of $p$, we conclude that $p(x)=q(x)=\Vert T(x)\Vert$ holds for every $x\in X$, and the proof is finished.
\end{proof}

\end{document}